\theoremstyle{plain} 
\newtheorem{theorem}{\indent\sc Theorem}[section] 
\newtheorem{corollary}[theorem]{\indent\sc Corollary}
\newtheorem{proposition}[theorem]{\indent\sc Proposition}
\theoremstyle{definition} 
\newtheorem{definition}[theorem]{\indent\sc Definition}
\newtheorem{remark}[theorem]{\indent\sc Remark}
\newtheorem{example}[theorem]{\indent\sc Example}
\DeclareMathOperator{\im}{im}
\DeclareMathOperator{\kernel}{ker}
\DeclareMathOperator{\sgn}{sgn}
\begin{document}

\title[An estimate of the first non-zero eigenvalue of the Laplacian]{An estimate of the first non-zero eigenvalue of the Laplacian by the Ricci curvature on edges of graphs} 

\author[T. Yamada]{Taiki Yamada} 


\footnote{ 
2010 \textit{Mathematics Subject Classification}.
Primary 05C12; Secondary 52C99, 35J05.
}
\keywords{ 
Graph theory, Coarse Ricci curvature, Simplicial complex, Graph Laplacian
}
\address[T. Yamada]{ 
Mathematical Institute in Tohoku University \endgraf
Sendai 980-8578 \endgraf
Japan
}
\email{mathyamada@dc.tohoku.ac.jp}


\maketitle

\begin{abstract}
We define the distance between edges of graphs and study the coarse Ricci curvature on edges. We consider the Laplacian on edges based on the Jost-Horak's definition of the Laplacian on simplicial complexes. As one of our main results, we obtain an estimate of the first non-zero eigenvalue of the Laplacian by the Ricci curvature for a regular graph. 
\end{abstract}

\section{Introduction}
 The Ricci curvature is one of the most important concepts in Riemannian geometry. There are some definitions of the generalized Ricci curvature and Ollivier's coarse Ricci curvature is one of them (see \cite{Ol1}, \cite{Ol2}). It is formulated by the Wasserstein distance on a metric space $(X, d)$ with a random walk $m=\left\{m_{x} \right\}_{x \in X}$, where $m_{x}$ is a probability measure on $X$. The coarse Ricci curvature is defined as, for two distinct points $x, y \in X$, 
	\begin{eqnarray*}
	\kappa(x, y) := 1 - \cfrac{W(m_{x}, m_{y})}{d(x, y)},
	\end{eqnarray*}
  where $W$($m_{x}, m_{y}$) is the $1$-Wasserstein distance between $m_{x}$ and $m_{y}$. This definition was applied to graphs around  2010 and many researchers are focused on this. In 2010, Lin, Lu and Yau \cite{Yau1} defined the Ricci curvature on vertices of graphs by using the coarse Ricci curvature of the lazy random walk and they studied the Ricci curvature of product space of graphs and random graphs. In 2012, Jost and Liu \cite{Jo2} studied the relation between the Ricci curvature and the  local clustering efficient. Recently, the Ricci curvature on graphs was applied to directed graphs \cite{Yamada} and internet topology \cite{Ni} and so on. In this paper, we study the Ricci curvature on edges of graphs.\\
　The study of the graph Laplacian has a long and prolific history. In 1847, Kirchhoff \cite{Kir} was first defined the graph Laplacian on real valued functions. In the early 1970s, Fiedler \cite{Fie} studied the relation between the first non-zero eigenvalue and the connectivity of a graph. After he was focused on the graph Laplacian, there has been many papers about the graph Laplacian and its spectrum. In 1935, Bottema \cite{Bo} introduced the normalized the graph Laplacian and studied the relation between the normalized graph Laplacian and the transition probability operator on graphs. Moreover, in 2009, Ollivier \cite{Ol1} obtained an estimate of the first non-zero eigenvalue of the normalized graph Laplacian by a lower bound of the coarse Ricci curvature.\\
　The graph Laplacian was generalized to simplicial complexes by Eckmann \cite{Eck} as the discrete version of the Hodge theory. This graph Laplacian is called the {\em higher order combinatorial Laplacian}. In 2013, Jost and Horak \cite{Ho} developed a general framework for Laplace operators defined by the combinatorial structure of a simplicial complex. This includes all graph Laplacians defined before, and they showed in \cite{Ho} that the spectrum of the graph Laplacian on vertices coincides with that of the graph Laplacian on edges. Considering the Ricci curvature on edges, we obtain the following theorem.
	\begin{theorem}
	\label{main0}
	Let $G$ be a $d$-regular graph and $\lambda_{1}$ the first non-zero eigenvalue of the Laplacian on edges. Suppose that $\kappa(e, e') \geq \kappa$ for any $e, e' \in E$ and for a real number $\kappa > 0$. Then we have
		\begin{eqnarray}
		\label{main0-0}
		\lambda_{1} \geq \kappa + \cfrac{2}{d} - 1,
		\end{eqnarray}
	where $\kappa(e, e')$ is the Ricci curvature on $e$ and $e'$ and the Laplacian here is defined by Jost-Horak $($see Definition \ref{laplacian}$).$
	\end{theorem}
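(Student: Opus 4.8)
The plan is to transport Ollivier's spectral argument \cite{Ol1}, which bounds the first non-zero eigenvalue of the normalized vertex Laplacian below by a lower Ricci bound, from vertices to edges. The natural first move is to split the Jost--Horak edge Laplacian $\Delta$ of Definition \ref{laplacian} into its diagonal and off-diagonal parts. For a $d$-regular graph I expect the normalization built into Definition \ref{laplacian} to make the diagonal constant, equal to $\frac{2}{d}$, since each edge has exactly two endpoints and each endpoint carries $d$ incident edges; this is the source of the $\frac{2}{d}$ in the estimate. Writing $\Delta = \frac{2}{d} I - A$, where $A$ is the resulting normalized signed edge-adjacency operator, a (real) eigenfunction $f$ for $\lambda_1$ satisfies $A f = (\frac{2}{d} - \lambda_1) f$, so the whole theorem reduces to showing that the spectral value $\nu := \frac{2}{d} - \lambda_1$ of $A$ on $f$ obeys $\nu \le 1 - \kappa$. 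This is exactly the point at which the curvature hypothesis must be injected.

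To bring in the curvature I would convert $\kappa(e,e') \ge \kappa$ into a contraction estimate. Recall that $\kappa(e,e') \ge \kappa$ means $W(m_e, m_{e'}) \le (1-\kappa)\, d(e,e')$. Introducing the averaging operator $(Mf)(e) := \sum_{e' \in E} m_e(e') f(e')$ of the edge random walk and the Lipschitz seminorm $\|f\|_{\mathrm{Lip}} := \sup_{e \ne e'} |f(e)-f(e')|/d(e,e')$ relative to the edge distance, Kantorovich--Rubinstein duality gives $|(Mf)(e) - (Mf)(e')| \le \|f\|_{\mathrm{Lip}}\, W(m_e, m_{e'})$ for every $f$. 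Combined with the bound on $W$, this yields the key contraction $\|Mf\|_{\mathrm{Lip}} \le (1-\kappa)\,\|f\|_{\mathrm{Lip}}$, valid for all $f$. This step is the verbatim edge analogue of Ollivier's Lipschitz-contraction lemma and should be routine once the edge metric and the walk $\{m_e\}$ are fixed.

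I would then feed the eigenfunction $f$ through the relation between $A$ and $M$. Applying the contraction estimate to $f$ and using $Af = \nu f$ forces $|\nu| \le 1-\kappa$, whence $\frac{2}{d} - \lambda_1 \le 1 - \kappa$ and therefore $\lambda_1 \ge \frac{2}{d} - (1-\kappa) = \kappa + \frac{2}{d} - 1$, which is \eqref{main0-0}. The additive $-(1-\kappa)$ comes entirely from the contraction factor, while the $+\frac{2}{d}$ comes from the constant diagonal of $\Delta$, so the two contributions assemble into the stated constant with no extra slack.

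The hard part will be securing the passage between the \emph{signed} operator $A$ and the \emph{sign-free} Markov operator $M$. The Jost--Horak down-Laplacian on edges inherits orientation-dependent signs from the boundary map $\partial$, so $A$ is not literally the transition operator of $\{m_e\}$, whereas the contraction estimate above controls only the genuine Markov operator $M$; indeed, these signs are exactly what keeps $\Delta$ positive semidefinite despite the small diagonal $\frac{2}{d}$. To close the argument I would need either to fix an orientation under which $A$ and $M$ agree on the relevant subspace, or to observe that only the magnitude $|(Mf)(e)-(Mf)(e')|$ enters the contraction, so that the sign pattern is immaterial to the final estimate. A secondary, but essential, verification is that the diagonal of the normalized Laplacian in Definition \ref{laplacian} is exactly $\frac{2}{d}$ for the $d$-regular case and that $f$ is taken orthogonal to the kernel; a slip in either normalization would alter the additive constant. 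Once the $A$--$M$ identification is pinned down, the eigenvalue bound drops out of the contraction estimate immediately.
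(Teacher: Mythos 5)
You have reproduced the skeleton of the paper's own argument: the decomposition of $\mathcal{L'}_{1}$ into the constant diagonal $\frac{2}{d}I$ plus an off-diagonal signed part $L^{down}$, the reduction of the theorem to $|2/d-\lambda_{1}|\le 1-\kappa$, and the use of Kantorovich--Rubinstein duality together with $W(m_{e},m_{e'})\le(1-\kappa)\,d(e,e')$. However, the step you defer --- identifying the signed operator $A=-L^{down}$ with the Markov operator $M$ --- is not a routine verification; it is the entire content of the proof, and neither of your two proposed fixes closes it. The ``only magnitudes matter'' route fails because the orientation signs sit \emph{inside} the sum $(Af)(e)=\sum_{\bar e}\pm m_{e}(\bar e)f(\bar e)$, so there is no inequality relating $(Af)(e)-(Af)(e')$ to $(Mf)(e)-(Mf)(e')$. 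The ``choose a good orientation'' route fails globally: requiring $\sgn([v],\partial[e])\,\sgn([v],\partial[e'])=+1$ for every pair of edges meeting at $v$ forces all edges incident to $v$ to carry the same sign $\epsilon(v)$ at $v$, and since an oriented edge has opposite signs at its two endpoints, $\epsilon$ would have to be a proper $2$-coloring of $G$; thus $A$ and $M$ agree in some gauge only when $G$ is bipartite. Relatedly, your operator-norm contraction step needs $f$ to be an eigenfunction of $M$ itself (i.e.\ $Mf=\nu f$), but $f$ is only an eigenfunction of the signed operator, so even when a good gauge exists locally the norm-level argument does not apply as stated.

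The paper circumvents both problems by arguing \emph{pointwise}: fix an adjacent pair $e=(x,y)$, $e'=(y,z)$, choose an orientation only on $\Gamma(e)\cup\Gamma(e')$ adapted to that pair, so that the eigenvalue equation reads $(Mf)(e)=(\lambda_{1}-\frac{2}{d})f(e)$ and likewise at $e'$, and apply duality to the Lipschitz-normalized $f$ to get $|2/d-\lambda_{1}|\,|f(e)-f(e')|\le 1-\kappa$; taking the supremum over adjacent pairs then gives the bound. Crucially, such a pair-adapted orientation exists exactly when $e$ and $e'$ do \emph{not} lie in a common triangle: if they do, the third edge $e_{0}=(x,z)$ would need sign $-1$ at both $x$ and $z$, which is impossible, so $e_{0}$ unavoidably enters the equation at $e'$ with coefficient $-\frac{1}{d}$. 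The paper's Case 2 handles this by deriving the identity $f(e_{0})=f(e)-f(e')$ from the eigenvalue equation at $e_{0}$ and then obtains the stronger bound $\lambda_{1}\ge\kappa+\frac{4}{d}-1$. As written, your proposal proves the theorem at best for bipartite (in particular triangle-free) regular graphs; the pointwise reformulation and the triangle case are genuine missing ideas, not details to be pinned down.
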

　The estimate of Theorem \ref{main0} is optimal. In fact, any star graph satisfies equality of \eqref{main0-0} (see Remark \ref{main0-1}).\\
　This paper is organized as follows. In the 2nd section, we define the Ricci curvature on edges of a graph and we prove some properties of the Ricci curvature. In the 3rd section, for a $1$-dimensional abstract simplicial complex, we prove some properties of the Laplacian on edges defined in \cite{Ho}. Moreover, we prove Theorem \ref{main0}. In the 4th section, we treat some examples of graphs, and calculate the Ricci curvature and the eigenvalue of the Laplacian. In the 5th section, considering weighted graphs, and we generalize the results proved in the previous sections. 
\section*{acknowledgment} 
The author thank Professor J\"{u}rgen Jost for useful suggestion. When I went to the Max Planck Institute, he gave me some helpful and accurate advices to this topic.

\section{Properties of Ricci curvature on edges of graphs}
 Before we define the Ricci curvature on edges, we present some preliminaries.\\
 　In this paper, $G=(V, E)$ is an undirected connected simple finite graph, where $V$ is the set of the vertices and $E$ the set of edges. That is,
	\begin{enumerate}
	\item for any two vertices, there exists a path connecting them,
	\item there exists no loop and no multiple edge,
	\item the number of vertices and edges is finite.
	\end{enumerate}
  For $x, y \in V$, we write $(x, y)$ as an edge connecting $x$ and $y$.  We denote the set of vertices of $G$ by $V(G)$ and the set of edges by $E(G)$. 
	\begin{definition} 
	\label{path}
		\begin{enumerate}
		\item Two distinct edges $e$ and $e'$ are {\em connected} if they have a common vertex.
		\item A {\em path} connecting two edges $e$ and $e'$ is a sequence of edges $\left\{e_{i} \right\}_{i=0}^{n}$ such that $e_{i}$ and $e_{i+1}$ are connected for $0 \leq i \leq n-1$ and $e_{0}=e$, $e_{n} = e'$. We call $n$ the {\em length} of the path. 
		\end{enumerate}
	\end{definition}
  The {\em distance} $d(e, e')$ between two edges $e, e' \in E$ is defined to be the length of a shortest path connecting $e$ and $e'$. \\
　 For any $x \in V$, the {\em neighborhood} of $x$ is defined as $\Gamma(x) := \left\{y \in V \mid (x, y) \in E \right\}$ and the {\em degree} of $x$, denoted by $d^{V}_{x}$, is the number of edges connecting $v$, i.e., $d^{V}_{x} = |\Gamma(x)|$. Similarly, we define the degree of an edge as following.
	\begin{definition}
	For any $e \in E$, the {\em neighborhood} of $e$ is defined as
		\begin{eqnarray*}
		\Gamma(e) := \left\{\bar{e} \in V \mid e\ \mathrm{and}\ \bar{e}\ \mathrm{are\ connected\ and}\ e \neq \bar{e}. \right\}.
		\end{eqnarray*}
	The {\em degree} of $e$, denoted by $d_{e}$, is the number of edges connecting $e$, i.e., $d_{e} = |\Gamma(e)|$. Moreover, $G$ is a $d$-\emph{regular graph} if every edge has the same degree $d$.
	\end{definition}
 With these definitions, we define the Ricci curvature on edges.
	\begin{definition}
	For any edge $e \in E$, we define a probability measure $m_{e}$ on $E$ by
		\begin{eqnarray}
		\label{measure}
		m_{e}(\bar{e}) = 
			\begin{cases}
			\cfrac{1}{d_{e}}, &\mathrm{if}\ \bar{e} \in \Gamma(e), \\
			0, & \mathrm{\mathrm{otherwise}}.
			\end{cases}
		\end{eqnarray}
	This defines a random walk $m=\left\{m_{x} \right\}_{x \in X}$.
	\end{definition} 

	\begin{definition}
	For two probability measures $\mu$ and $\nu$ on $E$, the 1-Wasserstein distance between $\mu$ and $\nu$ is written as
		\begin{eqnarray*}
		W(\mu, \nu) = \inf_{A} \sum_{\bar{e}, \bar{e'} \in E}A(\bar{e}, \bar{e'})d(\bar{e}, \bar{e'}),
		\end{eqnarray*}
	where $A : E \times E \to [0, 1]$ runs over all maps satisfying 
		\begin{eqnarray*}
			\begin{cases}
			\sum_{\bar{e'} \in E}A(\bar{e}, \bar{e'}) = \mu(\bar{e}),\\
			\sum_{\bar{e} \in E}A(\bar{e}, \bar{e'}) = \nu(\bar{e'}).
			\end{cases}
		\end{eqnarray*}
	Such a map $A$ is called a {\em coupling} between $\mu$ and $\nu$. 
	\end{definition}
	\begin{remark}
	There exists a coupling $A$ that attains the Wasserstein distance (see \cite{Le}, \cite{Vi1} and \cite{Vi2}).
	\end{remark}
   One of the most important properties of the Wasserstein distance is the Kantorovich-Rubinstein duality as stated as follows.
	\begin{proposition}[Kantorovich, Rubinstein]
	\label{kantoro}
	For two probability measures $\mu$ and $\nu$ on $E$, the 1-Wasserstein distance between $\mu$ and $\nu$ is written as
		\begin{eqnarray*}
		W(\mu, \nu) = \sup_{f : 1-Lip} \sum_{\bar{e} \in E} f(\bar{e})(m_{e}(\bar{e}) - m_{e'}(\bar{e})),
		\end{eqnarray*}
	where the supremum is taken over all functions on $G$ that satisfy $|f(e) - f(e')| \leq d(e, e')$ for any $e, e' \in E$.
	\end{proposition}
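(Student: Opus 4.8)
The plan is to recognize the infimum defining $W(\mu,\nu)$ as a finite-dimensional linear program and to derive the stated identity from linear-programming duality, exploiting that the ground cost $d(\cdot,\cdot)$ is a genuine metric on $E$. Since $E$ is finite, I would enumerate the edges as $\bar e_1,\dots,\bar e_N$ and write $\mu_i=\mu(\bar e_i)$, $\nu_j=\nu(\bar e_j)$, $d_{ij}=d(\bar e_i,\bar e_j)$ and $A_{ij}=A(\bar e_i,\bar e_j)$. Then $W(\mu,\nu)$ is the value of the primal program that minimizes $\sum_{i,j}A_{ij}d_{ij}$ over all $A_{ij}\ge 0$ subject to the two marginal constraints $\sum_j A_{ij}=\mu_i$ and $\sum_i A_{ij}=\nu_j$. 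This program is feasible (the product coupling $A_{ij}=\mu_i\nu_j$ satisfies the constraints) and its objective is bounded below by $0$, so the optimum is attained and strong duality applies; the attainment is exactly the content of the preceding remark.

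First I would dispose of the easy inequality $\sup_f\sum_{\bar e}f(\bar e)(\mu(\bar e)-\nu(\bar e))\le W(\mu,\nu)$. For any $1$-Lipschitz $f$ and any feasible coupling $A$, substituting the marginal constraints gives
	\begin{eqnarray*}
	\sum_{\bar e}f(\bar e)(\mu(\bar e)-\nu(\bar e))=\sum_{i,j}A_{ij}\bigl(f(\bar e_i)-f(\bar e_j)\bigr)\le\sum_{i,j}A_{ij}\,d(\bar e_i,\bar e_j),
	\end{eqnarray*}
where the inequality uses $|f(\bar e_i)-f(\bar e_j)|\le d(\bar e_i,\bar e_j)$ together with $A_{ij}\ge 0$. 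Taking the infimum over $A$ and then the supremum over $f$ yields this half of the identity.

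The substantial direction is the reverse inequality, and this is where I expect the main obstacle. Dualizing the primal with multipliers $\phi_i,\psi_j$ for the two marginal constraints produces the dual program: maximize $\sum_i\phi_i\mu_i+\sum_j\psi_j\nu_j$ subject to $\phi_i+\psi_j\le d_{ij}$ for all $i,j$. The crux is to show that an optimal dual pair may be taken in the antisymmetric form $(\phi_i,\psi_j)=(f(\bar e_i),-f(\bar e_j))$ with $f$ being $1$-Lipschitz, since then the dual objective becomes exactly $\sum_{\bar e}f(\bar e)(\mu(\bar e)-\nu(\bar e))$ and strong duality closes the argument. To carry out the reduction I would pass to the $c$-transform: given a feasible pair $(\phi,\psi)$, set $f(\bar e_i):=\min_j\bigl(d_{ij}-\psi_j\bigr)$. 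The triangle inequality for $d$ is precisely what forces $f$ to be $1$-Lipschitz, while $d_{ii}=0$ gives $f(\bar e_i)\ge\phi_i$ and $-f(\bar e_j)\ge\psi_j$, so that replacing $(\phi,\psi)$ by $(f,-f)$ does not decrease the dual objective, as $\mu_i,\nu_j\ge 0$. Thus the dual optimum is attained on antisymmetric Lipschitz potentials. Verifying these monotonicity estimates for the $c$-transform, rather than invoking the duality theorem itself, is the only delicate point, and it is the step I would write out in full.
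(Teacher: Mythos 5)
Your proof is correct. Note, however, that the paper does not prove Proposition \ref{kantoro} at all: it is stated as a classical result and delegated to the references \cite{Le}, \cite{Vi1}, \cite{Vi2} (the preceding remark likewise cites these for the existence of an optimal coupling). So there is no "paper proof" to match; what you have supplied is a genuinely self-contained argument, and it is the right one for this setting. Since $E$ is finite, the problem really is a finite linear program, strong duality is elementary (feasibility via the product coupling plus boundedness below), and your $c$-transform reduction is carried out correctly: the triangle inequality for the edge-distance gives $f(\bar e_i)\le d_{ik}+f(\bar e_k)$, hence $1$-Lipschitzness by symmetry of $d$; dual feasibility gives $f(\bar e_i)\ge\phi_i$; and $d_{jj}=0$ gives $-f(\bar e_j)\ge\psi_j$, so passing to the antisymmetric pair $(f,-f)$ only increases the dual objective because $\mu,\nu\ge 0$. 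One hypothesis worth stating explicitly, since you lean on it twice, is that the edge-distance of Definition \ref{path} is indeed a metric on $E$ (it is the shortest-path metric of the line graph, finite because $G$ is connected); this is what licenses both the triangle inequality and $d_{ii}=0$ in your reduction. The trade-off between the two routes is the usual one: the cited references prove Kantorovich--Rubinstein duality in far greater generality (Polish spaces, lower semicontinuous costs), at the price of machinery; your argument buys a short, elementary, fully rigorous proof tailored to the finite graph case, which is arguably preferable in a paper whose Wasserstein distances only ever involve finitely supported measures.
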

	\begin{definition}
	For any two distinct edges $e, e' \in E$, the {\em Ricci curvature} of $e$ and $e'$ is defined as
		\begin{eqnarray*}
		\kappa(e, e') = 1 - \cfrac{W(m_{e}, m_{e'})}{d(e, e')}.
		\end{eqnarray*}
	\end{definition}
  If we apply some properties of the Ricci curvature on vertices proved in \cite{Jo0} and \cite{Jo2} to the Ricci curvature on edges, then we obtain the following results.
	\begin{proposition}
	\label{pair}
	If $\kappa(e, e') \geq \kappa_{0}$ for any edges $e$ and $e'$ with $d(e, e') = 1$, then $\kappa(e, e') \geq \kappa_{0}$ for any pair of edges $(e, e')$.
	\end{proposition}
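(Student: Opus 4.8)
The plan is to propagate the hypothesis from distance-one pairs to arbitrary pairs by walking along a shortest path and invoking the triangle inequality for the Wasserstein distance $W$.

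First I would record the triangle inequality: for any three probability measures $\mu, \nu, \rho$ on $E$ one has $W(\mu, \nu) \le W(\mu, \rho) + W(\rho, \nu)$. This is immediate from the Kantorovich--Rubinstein duality of Proposition \ref{kantoro}: for every $1$-Lipschitz function $f$ we may split
\[
\sum_{\bar e \in E} f(\bar e)\bigl(\mu(\bar e) - \nu(\bar e)\bigr)
= \sum_{\bar e \in E} f(\bar e)\bigl(\mu(\bar e) - \rho(\bar e)\bigr)
+ \sum_{\bar e \in E} f(\bar e)\bigl(\rho(\bar e) - \nu(\bar e)\bigr),
\]
and each of the two sums on the right is at most $W(\mu, \rho)$ and $W(\rho, \nu)$ respectively; taking the supremum over $f$ then gives the claim. (One could equally glue the optimal couplings whose existence is guaranteed by the Remark following the definition of $W$.)

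Next I would fix a pair $(e, e')$ and put $n = d(e, e')$. By definition of the distance there is a shortest path $e = e_0, e_1, \dots, e_n = e'$ in the sense of Definition \ref{path}, so that consecutive edges $e_i$ and $e_{i+1}$ are connected and hence $d(e_i, e_{i+1}) = 1$. Iterating the triangle inequality yields
\[
W(m_e, m_{e'}) \le \sum_{i=0}^{n-1} W(m_{e_i}, m_{e_{i+1}}).
\]
For each step the hypothesis gives $\kappa(e_i, e_{i+1}) \ge \kappa_0$, and since $d(e_i, e_{i+1}) = 1$ the definition of the Ricci curvature reads $W(m_{e_i}, m_{e_{i+1}}) = 1 - \kappa(e_i, e_{i+1}) \le 1 - \kappa_0$.

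Summing the $n$ estimates gives $W(m_e, m_{e'}) \le n(1 - \kappa_0) = (1 - \kappa_0)\, d(e, e')$, and therefore
\[
\kappa(e, e') = 1 - \frac{W(m_e, m_{e'})}{d(e, e')} \ge \kappa_0,
\]
as required. The one genuine ingredient is the triangle inequality for $W$; it is standard, and everything else is a routine telescoping along a geodesic. The only point that needs care is checking that a shortest path between two edges really does consist of pairwise distance-one (connected) edges, so that the hypothesis is applicable at every intermediate step; I expect this to be the sole place where an oversight could creep in.
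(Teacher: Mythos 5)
Your proof is correct and is essentially the argument the paper relies on: the paper states Proposition \ref{pair} without proof, citing the vertex-case references, where exactly this telescoping of the Wasserstein triangle inequality (via Kantorovich duality or gluing of optimal couplings) along a shortest path is the standard argument. The point you flagged is indeed harmless, since by Definition \ref{path} consecutive edges of a shortest path are connected and distinct, hence at distance one, so the hypothesis applies at every step.
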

	\begin{theorem}
	\label{Riccilower}
  	For any edges $e$ and $e'$ with $d(e, e') = 1$, we have
    		\begin{eqnarray*}
     		\kappa(e, e') &\geq& - 2 \left(1 - \cfrac{1}{d_{e}} - \cfrac{1}{d_{e'}} \right)_{+}\\
		&=& 
			\begin{cases}
			- 2  + \cfrac{2}{d_{e}} + \cfrac{2}{d_{e'}} ,& \mathrm{if}\ d_{e}>1\ \mathrm{and}\ d_{e'}>1\\
			0,& otherwise.
			\end{cases}
		\end{eqnarray*} 
	\end{theorem}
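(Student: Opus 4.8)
The plan is to bound the Wasserstein distance $W(m_e,m_{e'})$ from above by exhibiting one explicit coupling and then read off the curvature from $\kappa(e,e')=1-W(m_e,m_{e'})$, which is legitimate precisely because $d(e,e')=1$. Since any coupling $A$ between $m_e$ and $m_{e'}$ satisfies $W(m_e,m_{e'})\le\sum_{\bar e,\bar e'}A(\bar e,\bar e')\,d(\bar e,\bar e')$, it suffices to produce a single coupling whose cost is at most $3-\tfrac{2}{d_e}-\tfrac{2}{d_{e'}}$ in the main case and at most $1$ in the degenerate case. First I would dispose of the degenerate case: if $d_e=1$ then $\Gamma(e)=\{e'\}$ (because $e'$ is always connected to $e$ when $d(e,e')=1$), so $m_e=\delta_{e'}$ and $W(m_e,m_{e'})=\sum_{\bar e'\in\Gamma(e')}\tfrac{1}{d_{e'}}\,d(e',\bar e')=1$, giving $\kappa(e,e')=0$; the case $d_{e'}=1$ is symmetric. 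This matches the ``otherwise'' branch, where $-2\bigl(1-\tfrac{1}{d_e}-\tfrac{1}{d_{e'}}\bigr)_+=0$.

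For the main case I would write $e=(x,y)$ and $e'=(y,z)$ with common vertex $y$, and partition $\mathrm{supp}(m_e)=\Gamma(e)$ into the edges at $x$ (other than $e$), the edges at $y$ (other than $e$, which include $e'$), and likewise $\mathrm{supp}(m_{e'})=\Gamma(e')$ into the edges at $z$ and the edges at $y$, including $e$. The two structural facts I would lean on are: every edge of $\Gamma(e)$ lies at distance $1$ from $e$, and every edge of $\Gamma(e')$ lies at distance $1$ from $e'$; meanwhile the common edges $\Gamma(e)\cap\Gamma(e')$ — the edges at $y$, together with the chord $(x,z)$ if the triangle $xyz$ is present — carry mass under both measures and may be left in place at zero cost. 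Thus $e$ acts as a universal distance-$1$ sink for the supply and $e'$ as a universal distance-$1$ source for the demand.

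The coupling I would build does three things: (i) it keeps $\min\!\bigl(\tfrac1{d_e},\tfrac1{d_{e'}}\bigr)$ in place on each common edge, at cost $0$; (ii) it fills the demand $\tfrac1{d_{e'}}$ at $e$ from the $x$-side supply and empties the supply $\tfrac1{d_e}$ at $e'$ into the $z$-side demand, each hop having length $1$; (iii) it routes whatever supply and demand remain — living on the $x$-edges and the $z$-edges respectively — against one another, where the distance is at most $3$ along a path $\bar e\to e\to e'\to\bar e'$. A direct accounting in the generic situation then gives total cost $\le\tfrac1{d_{e'}}+\tfrac1{d_e}+3\bigl(1-\tfrac1{d_e}-\tfrac1{d_{e'}}\bigr)=3-\tfrac2{d_e}-\tfrac2{d_{e'}}$, so that $\kappa(e,e')=1-W(m_e,m_{e'})\ge-2+\tfrac2{d_e}+\tfrac2{d_{e'}}=-2\bigl(1-\tfrac1{d_e}-\tfrac1{d_{e'}}\bigr)_+$, the last equality holding because $d_e,d_{e'}\ge2$ forces $\tfrac1{d_e}+\tfrac1{d_{e'}}\le1$.

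The hard part will be the bookkeeping in step (ii): the two distance-$1$ hub transports may be forced to overlap — for instance $e'$ may have to send part of its mass directly to $e$ when the $z$-side demand is too small to absorb all of $e'$'s mass — and each unit of such overlap costs an extra $2$ relative to the ideal count. I expect the resolution to be that this overlap $\delta$ obeys $\delta\le\tfrac1{d_e}+\tfrac1{d_{e'}}+\tfrac{c}{d_{e'}}-1$, where $c$ is the number of common edges, and that the resulting loss $2\delta$ is always dominated by the quantity $3c/d_{e'}$ saved by keeping the $c$ common edges in place, together with the slack in $\tfrac1{d_e}+\tfrac1{d_{e'}}\le1$; making this quantitative is the only nonroutine point. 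Finally, I would record the conceptual shortcut that organizes all of the above: the edges of $G$ under the ``connected'' relation are exactly the vertices of the line graph $L(G)$, $d_e$ is the degree of $e$ in $L(G)$, and $m_e$ is its standard random walk, so the statement is just the Jost--Liu lower bound for Ollivier curvature on vertices applied to $L(G)$, which is the sense in which it follows from \cite{Jo0} and \cite{Jo2}.
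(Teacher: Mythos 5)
Your overall strategy---bound $W(m_e,m_{e'})$ above by exhibiting one explicit coupling and use $d(e,e')=1$ to read off $\kappa$---is sound, and your degenerate case ($d_e=1$ forces $m_e=\delta_{e'}$, whence $W=1$ and $\kappa=0$) is exactly right. Note, however, that the paper contains no written proof of this theorem at all: it is stated as a consequence of ``applying the vertex results of \cite{Jo0} and \cite{Jo2} to edges,'' which is precisely your closing line-graph remark ($\Gamma(e)$, $d_e$, $m_e$ and the edge distance are the neighborhood, degree, random walk and graph distance in $L(G)$). So your final paragraph, not the coupling, is the paper's actual argument; your coupling construction is a self-contained re-proof (essentially the Jost--Liu coupling transplanted to $L(G)$), which is a legitimate and arguably more informative route.

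The one genuine soft spot is the bookkeeping you flag in step (ii), and your proposed resolution (the overlap estimate on $\delta$ balanced against the $3c/d_{e'}$ savings) points in the wrong direction. The overlap problem is created by your step (i) and by restricting the unit-cost transports to the ``$x$-side'' and ``$z$-side'': this can genuinely fail, e.g.\ in $K_{3}$ with $e=(x,y)$, $e'=(y,z)$ the $x$-side is empty while $e$ carries demand $1/d_{e'}>0$. The fix is to drop step (i) altogether---keeping mass in place on common edges is only needed for the sharper Jost--Liu bound involving the clustering term, not for this theorem---and to let the two unit-cost transports use all of $\Gamma(e)\setminus\{e'\}$ and $\Gamma(e')\setminus\{e\}$. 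Concretely, since $d_e,d_{e'}\ge 2$ gives $1/d_e+1/d_{e'}\le 1$: (a) the supply $1/d_e$ sitting at $e'$ is routed, at cost $1$ per unit, into the demand on $\Gamma(e')\setminus\{e\}$, which totals $1-1/d_{e'}\ge 1/d_e$; (b) the demand $1/d_{e'}$ at $e$ is filled, at cost $1$ per unit, from the supply on $\Gamma(e)\setminus\{e'\}$, which totals $1-1/d_e\ge 1/d_{e'}$; (c) the leftovers on the two sides are equal, namely $1-1/d_e-1/d_{e'}$ each, and any leftover pair is at distance at most $3$ via $\bar e\to e\to e'\to\bar e'$. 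Steps (a), (b), (c) draw on disjoint supplies and fill disjoint demands, so they assemble into a genuine coupling of total cost at most $1/d_e+1/d_{e'}+3\left(1-1/d_e-1/d_{e'}\right)=3-2/d_e-2/d_{e'}$, with no $\delta$ and no $c$ to track; this completes your main case.
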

	\begin{theorem}
	\label{Ricciupper}
  	For any edges $e \in E$ and $e' \in \Gamma(e)$, we have
  		\begin{eqnarray*}
   		\kappa(e, e') \leq \cfrac{|\Gamma(e) \cup \Gamma(e')|}{\max \left\{d_{e}, d_{e'}\right\}}.
  		\end{eqnarray*}
	\end{theorem}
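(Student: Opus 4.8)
The plan is to pass from the curvature to the Wasserstein distance and then estimate the latter \emph{from below} via the Kantorovich--Rubinstein duality (Proposition \ref{kantoro}). Since $e' \in \Gamma(e)$, the two edges are connected, so $d(e,e')=1$ and hence $\kappa(e,e') = 1 - W(m_{e}, m_{e'})$; it therefore suffices to bound $W(m_{e},m_{e'})$ below. I would assume without loss of generality that $d_{e} \ge d_{e'}$, so that $\max\{d_{e},d_{e'}\} = d_{e}$, and write $a := |\Gamma(e)\cap\Gamma(e')|$.

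The one geometric observation that makes everything work is that any two \emph{distinct} edges are at distance at least $1$, and consequently every function taking values in $\{0,1\}$ is automatically $1$-Lipschitz for the edge distance. I would therefore feed the indicator $f = \mathbf{1}_{\Gamma(e)\setminus\Gamma(e')}$ into the duality formula. Because $m_{e}$ is the uniform measure on $\Gamma(e)$ while $m_{e'}$ is supported on $\Gamma(e')$, this $f$ has $m_{e'}$-mass $0$ on $\Gamma(e)\setminus\Gamma(e')$ and $m_{e}$-mass $|\Gamma(e)\setminus\Gamma(e')|/d_{e} = 1 - a/d_{e}$. Duality then gives $W(m_{e},m_{e'}) \ge 1 - a/d_{e}$; in fact this choice of level set is exactly the $m_{e}$-heavy set, so the bound is the total variation distance $\|m_{e}-m_{e'}\|_{\mathrm{TV}}$, and a short count of the three pieces $\Gamma(e)\cap\Gamma(e')$, $\Gamma(e)\setminus\Gamma(e')$, $\Gamma(e')\setminus\Gamma(e)$ confirms it cannot be improved by another set.

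Feeding this back yields $\kappa(e,e') = 1 - W(m_{e},m_{e'}) \le a/d_{e} = |\Gamma(e)\cap\Gamma(e')|/\max\{d_{e},d_{e'}\}$; since $\Gamma(e)\cap\Gamma(e') \subseteq \Gamma(e)\cup\Gamma(e')$, the asserted inequality follows \emph{a fortiori}. Indeed the argument produces the sharper estimate with the intersection in the numerator, from which the stated one is immediate. I do not anticipate a serious obstacle: the whole content is the single remark that distinct edges are never closer than distance $1$, which converts the Wasserstein lower bound into an elementary count of the supports of $m_{e}$ and $m_{e'}$. The only points needing care are the reduction $\max\{d_{e},d_{e'}\}=d_{e}$ --- one must take the $m_{e}$-heavy level set $\Gamma(e)\setminus\Gamma(e')$ rather than $\Gamma(e)$ itself so that the \emph{larger} degree lands in the denominator --- and the (immediate) verification that the chosen indicator is genuinely $1$-Lipschitz.
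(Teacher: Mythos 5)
Your proof is correct, and it is essentially the argument the paper implicitly relies on: the paper offers no proof of Theorem \ref{Ricciupper} at all, presenting it as a transcription to edges of the vertex results of \cite{Jo0} and \cite{Jo2}, and your computation --- Kantorovich duality tested against the indicator of the $m_{e}$-heavy set $\Gamma(e)\setminus\Gamma(e')$, which is legitimate because distinct edges are at distance at least $1$, so that $W(m_{e},m_{e'}) \ge 1 - |\Gamma(e)\cap\Gamma(e')|/\max\{d_{e},d_{e'}\}$ --- is exactly the standard Jost--Liu argument adapted to edges. Two remarks. First, what you prove is the sharper inequality $\kappa(e,e') \le |\Gamma(e)\cap\Gamma(e')|/\max\{d_{e},d_{e'}\}$, with the \emph{intersection} in the numerator; this is almost certainly the statement the author intends, because as literally written (with the union) the theorem is vacuous: $\Gamma(e) \subseteq \Gamma(e)\cup\Gamma(e')$ gives $|\Gamma(e)\cup\Gamma(e')| \ge \max\{d_{e},d_{e'}\}$, so the stated right-hand side is at least $1$, while $\kappa(e,e') \le 1$ holds trivially since $W(m_{e},m_{e'}) \ge 0$. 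The weighted analogue, Theorem \ref{weight2}, whose numerator is $w_{\cap} = \sum_{\bar{e} \in \Gamma(e)\cap\Gamma(e')} w(\bar{e})$ and which the author asserts reduces to Theorem \ref{Ricciupper} in the unweighted case, confirms that the union is a typo for the intersection; your proof repairs this and the stated bound follows \emph{a fortiori}, as you note. Second, a small hygiene point: your parenthetical claim that no other level set improves the bound is not needed for the theorem (duality only requires exhibiting one $1$-Lipschitz function), so it can be dropped without loss.
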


\section{Properties of the Laplacian on edges in the case $\dim G=1$}
 First we review the definition of the cohomology. Let $K$ be an abstract simplicial complex and $S_{i}(K)$ the set of all $i$-faces of $K$. We say that a face $F$ is {\em oriented} if we choose an ordering on its vertices and write $[F]$. \\
　The $i$-th chain group $C_{i}(K, \mathbb{R})$ of $K$ with coefficients in $\mathbb{R}$ is a vector space over the real field $\mathbb{R}$ with basis $B_{i}(K, \mathbb{R})=\left\{ [F] \mid F \in S_{i}(K) \right\}$, and the $i$-th cochain group $C^{i}(K, \mathbb{R})$ is defined as the dual of the $i$-th chain group.
	\begin{definition}
	For the cochain groups, the {\em simplicial coboundary maps} $\delta_{i} : C^{i}(K, \mathbb{R}) \to C^{i+1}(K, \mathbb{R})$ are defined by
	\begin{eqnarray*}
(\delta_{i} f)([v_{0}, \cdots, v_{i + 1}]) = \sum_{j = 0}^{i + 1} (- 1)^{j} f([v_{0}, \cdots, \hat{v_{j}}, \cdots, v_{i + 1}])
	\end{eqnarray*}
	for $f \in C^{i}(K, \mathbb{R})$, where $\hat{v_{j}}$ means that the vertex $v_{j}$ has been omitted.
	\end{definition} 
Note that the one-dimensional vector space $C^{-1}(K, \mathbb{R})$ is generated by the identity function on the empty simplex. The $\delta_{i}$ are the connecting maps in the {\em augmented cochain complex} of $K$ with coefficients in $\mathbb{R}$, i.e, the sequence of the vector spaces with the linear transformations
	\begin{eqnarray*}
 	\cdots \xleftarrow{\delta_{i+1}} C^{i+1}(K, \mathbb{R}) \xleftarrow{\delta_{i}} C^{i}(K, \mathbb{R}) \xleftarrow{\delta_{i-1}} \cdots \xleftarrow{\delta_{1}} C^{1}(K, \mathbb{R}) \xleftarrow{\delta_{0}} C^{0}(K, \mathbb{R}) \xleftarrow{\delta_{-1}} C^{-1}(K, \mathbb{R}) \leftarrow 0
	\end{eqnarray*} 
It is easy to show that $\delta_{i} \delta_{i-1} = 0$, thus the image of $\delta_{i-1}$ is contained in the kernel of $\delta_{i}$. We define the reduced cohomology group by
	\begin{eqnarray*}
	\tilde{H}^{i}(K, \mathbb{R}) := \ker \delta_{i} / \im \delta_{i-1}.
	\end{eqnarray*}
In order to define the Laplacian, we define the boundary of the oriented face and the inner product.
	\begin{definition}
	Let $F'=\left\{v_{0}, \cdots, v_{i+1}\right\}$ be an $(i+1)$-face of $K$ and $F = \left\{v_{0}, \cdots, \hat{v_{j}}, \cdots, v_{i+1}\right\}$ an $i$-face of $F'$. The {\em boundary of the oriented face} $[F']$ is 
		\begin{eqnarray*}
 		\partial [F'] = \sum_{j}(-1)^{j}[v_{0}, \cdots, \hat{v_{j}}, \cdots, v_{i+1}],
		\end{eqnarray*}
	and the sign of $[F]$ in the boundary of $[F']$ is denoted by sgn($[F], \partial [F']$) and is equal to $(-1)^{j}$.
	\end{definition}
	\begin{definition}
	The {\em inner product} on the space $C^{i}(K, \mathbb{R})$ is
		\begin{eqnarray}
		\label{inner}
		(f, g)_{C^{i}} = \sum_{F \in S_{i}(K)} w(F) f([F]) g([F])
		\end{eqnarray}
	for $f$, $g \in C^{i}(K, \mathbb{R})$, where $w: \bigcup_{i = 0} S_{i}(K) \to \mathbb{R}^{+}$ is a function with $w(\emptyset) = 0$. We call $w$ the weight function on $K$.
	\end{definition}
For the innner product on $C^{i}(K, \mathbb{R})$, the adjoint $\delta^{*}_{i} : C^{i + 1}(K, \mathbb{R}) \to C^{i}(K, \mathbb{R})$ of the coboundary operator $\delta_{i}$ is defined by
	\begin{eqnarray*}
	(\delta_{i} f_{1}, f_{2})_{C^{i+1}} = ( f_{1},\delta^{*}_{i} f_{2})_{C^{i}},
	\end{eqnarray*}
for every $f_{1} \in C^{i}(K, \mathbb{R})$ and $f_{2} \in C^{i + 1}(K, \mathbb{R})$. The adjoint operator $\delta_{i}^{*}$ is expressed as
	\begin{eqnarray*}
	(\delta_{i}^{*}f)(F) = \cfrac{1}{w(F)}\sum_{F' \in S_{i+1}(K): F \in \partial F'} \sgn ([F], \partial 
[F'])f([F'])
	\end{eqnarray*}
	for $f \in C^{i+1}(K, \mathbb{R})$.
	\begin{definition}[Horak, Jost, \cite{Ho}]
	\label{laplacian}
	We define the Laplace operators on $C^{i}(K, \mathbb{R})$ as follows.
		\begin{enumerate}
		\item The $i$-{\em dimensional combinatorial up Laplace operator} or {\em simply $i$-up Laplace operator} is defined by
 			\begin{eqnarray*}
  			\mathcal{L}^{up}_{i}(K) := \delta^{*}_{i} \delta_{i}.
 			\end{eqnarray*}
		\item The {\em $i$-dimensional combinatorial down Laplace operator} or {\em simply $i$-down Laplace operator} is defined by
 			\begin{eqnarray*}
  			\mathcal{L}^{down}_{i}(K) := \delta_{i-1} \delta^{*}_{i-1}.
 			\end{eqnarray*}
		\item The {\em $i$-dimensional combinatorial Laplace operator} or {simply $i$-Laplace operator} is defined by
 			\begin{eqnarray*}
  			\mathcal{L}_{i}(K) := \mathcal{L}^{up}_{i}(K) + \mathcal{L}^{down}_{i}(K) =  \delta^{*}_{i} \delta_{i} + \delta_{i-1} \delta^{*}_{i-1}.
 			\end{eqnarray*}
		\end{enumerate}
	\end{definition}
	\begin{remark}
	\label{eigenvalue}
	Since their operators are all self-adjoint and non-negative, the eigenvalues are real and non-negative. In addition, since $\delta_{i} \delta_{i-1} = 0$ and $\delta^{*}_{i-1} \delta^{*}_{i} = 0$, we have
		\begin{eqnarray*}
		\im \mathcal{L}^{down}_{i} \subset \kernel \mathcal{L}^{up}_{i},\\
		\im \mathcal{L}^{up}_{i} \subset \kernel \mathcal{L}^{down}_{i}.
		\end{eqnarray*}
	Thus, $\lambda$ is a non-zero eigenvalue of $\mathcal{L}_{i}$ if and only if it is a non-zero eigenvalue of $\mathcal{L}^{up}_{i}$ or $\mathcal{L}^{down}_{i}$. 
	\end{remark}
We represent the Laplace operators by the matrix form. Let $D_{i}$ be the matrix corresponding to the operator $\delta_{i}$, $D_{i}^{T}$ its transpose and $W_{i}$ the diagonal matrix representing their scalar product on $C^{i}$, then the operators $\mathcal{L}^{up}_{i}(K)$ and $\mathcal{L}^{down}_{i}(K)$ are expressed as 
	\begin{eqnarray*}
	\mathcal{L}^{up}_{i}(K) = W_{i}^{-1}D_{i}^{T}W_{i+1}D_{i}
	\end{eqnarray*}
and
	\begin{eqnarray*}
	\mathcal{L}^{down}_{i}(K) = D_{i-1}W_{i-1}^{-1}D_{i-1}^{T}W_{i}.
	\end{eqnarray*}
　Since a graph is a $1$-dimensional abstract simplicial complex and we have $W_{-1} = w(\emptyset) = 0$, we obtain $\mathcal{L}^{down}_{0}(G) = 0$ and $\mathcal{L}^{up}_{1}(G) = 0$. As the result, it is sufficient to consider the following Laplace operator:
	\begin{eqnarray*}
	\mathcal{L}_{0}(G) = \mathcal{L}^{up}_{0}(G) = W_{0}^{-1}D_{0}^{T}W_{1}D_{0}
	\end{eqnarray*}
and
	\begin{eqnarray*}
	\mathcal{L}_{1}(G) = \mathcal{L}^{down}_{1}(G) = D_{0}W_{0}^{-1}D_{0}^{T}W_{1}.
	\end{eqnarray*}
By the matrix form, $\lambda$ is a non-zero eigenvalue of $\mathcal{L}_{0}(G)$ if and only if it is a non-zero eigenvalue of $\mathcal{L}_{1}(G)$.
	\begin{remark}
If $W_{0}$ and $W_{1}$ are the identity matrix, then the $0$-Laplace operator $\mathcal{L}_{0}(G)$ corresponds to the graph Laplacian, so the operator does not depend on the orientation of faces. On the other hand, the $1$-Laplace operator $\mathcal{L}_{1}(G)$ depends on the orientation of faces. However, the $1$-Laplacian has the same spectrum as the $0$-Laplacian, except for the multiplicity of the eigenvalue $0$, so the spectrum of $\mathcal{L}_{1}(G)$ does not depend on the orientation of faces.
	\end{remark}
If $W_{1}$ is the identity matrix and if $w(v_{i})$ is the degree of $v_{i}$, then the obtained operator is called the {\em normalized combinatorial Laplace operator} and is denoted by $\Delta_{0}$.
On the other hand, we want to obtain an estimate of the first non-zero eigenvalue of the $1$-Laplacian by the Ollivier's Ricci curvature. In order to do so, we assume that $W_{0}$ is an identity matrix and $w(e_{i}) = 1/d_{e_{i}}$, and the obtained operator is denoted by $\mathcal{L'}_{1}$.\\
Before we obtain an estimate of the eigenvalues of the Laplacian, we decompose the Laplacian into the diagonal component and the other part
	\begin{eqnarray*}
	\mathcal{L'}_{1} = \mathrm{Diag}(\mathcal{L'}_{1}) + L^{down},
	\end{eqnarray*}
	where 
	\begin{eqnarray*}
	(L^{down}f)(e) = \sum_{e' \in E(G): v= e \cap e'} \mathrm{sgn}([v], \partial [e]) \mathrm{sgn}([v], \partial [e'])m_{e}(e') f(e') \cfrac{d_{e}}{d_{e'}}.
	\end{eqnarray*}
　Ollivier proved an estimate of the first non-zero eigenvalue of the normalized graph Laplace operator $\Delta_{0}$ by the Ollivier's Ricci curvature as follows.
	\begin{theorem}[Ollivier, \cite{Ol1}]
	On $(V(G), d, m)$, if $\kappa(x, y) \geq \kappa$ for any $(x, y) \in E$ and for a positive real number $\kappa$, then the first non-zero eigenvalue of the normalized graph Laplace operator $\Delta_{0}$ satisfies
		\begin{eqnarray*}
		\lambda_{1} \geq \kappa,
		\end{eqnarray*}
	\end{theorem}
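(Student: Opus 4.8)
The plan is to pass from the Laplacian to the associated averaging (random walk) operator and to exploit the fact that a lower Ricci bound forces this operator to contract the Lipschitz seminorm. Define the averaging operator $M$ acting on functions on $V(G)$ by $(Mf)(x) = \sum_{y \in V} m_{x}(y) f(y)$, where $m_{x}(y) = 1/d^{V}_{x}$ for $y \in \Gamma(x)$ is the vertex analogue of the walk \eqref{measure}. With $w(v_{i})$ equal to the degree of $v_{i}$ and $W_{1}$ the identity, a direct computation of $W_{0}^{-1}D_{0}^{T}D_{0}$ gives the identity $\Delta_{0} = I - M$, so that $\lambda$ is an eigenvalue of $\Delta_{0}$ exactly when $1 - \lambda$ is an eigenvalue of $M$. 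Since the normalized walk is reversible (equivalently, $M$ is conjugate to a symmetric matrix via the degree weights), all eigenvalues of $M$ are real; the constant functions give the eigenvalue $1$ of $M$, i.e. the eigenvalue $0$ of $\Delta_{0}$, so it suffices to bound the remaining eigenvalues of $M$ from above by $1-\kappa$.

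First I would record the key contraction estimate: for every function $f$ on $V(G)$,
\[
\mathrm{Lip}(Mf) \leq (1-\kappa)\,\mathrm{Lip}(f),
\]
where $\mathrm{Lip}(f) = \sup_{x \neq y} |f(x)-f(y)|/d(x,y)$. This follows from the Kantorovich--Rubinstein duality (the vertex analogue of Proposition \ref{kantoro}): if $f$ is $L$-Lipschitz then $f/L$ is $1$-Lipschitz, so for any pair $x,y$,
\[
|Mf(x) - Mf(y)| = \Bigl|\sum_{z} f(z)\bigl(m_{x}(z) - m_{y}(z)\bigr)\Bigr| \leq L\, W(m_{x}, m_{y}).
\]
By the definition of the Ricci curvature together with the hypothesis $\kappa(x,y) \geq \kappa$ (extended from adjacent pairs to all pairs by the vertex analogue of Proposition \ref{pair}), we have $W(m_{x}, m_{y}) \leq (1-\kappa)\,d(x,y)$, which yields the claimed bound after dividing by $d(x,y)$ and taking the supremum over $x \neq y$.

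Finally I would feed an eigenfunction into this estimate. Let $f$ be a non-constant eigenfunction of $M$ with real eigenvalue $\mu$, so $Mf = \mu f$. Then $\mathrm{Lip}(Mf) = |\mu|\,\mathrm{Lip}(f)$, and since $f$ is non-constant we have $\mathrm{Lip}(f) > 0$; dividing the contraction inequality by $\mathrm{Lip}(f)$ gives $|\mu| \leq 1 - \kappa$. Writing $\mu = 1 - \lambda$ for the corresponding eigenvalue $\lambda$ of $\Delta_{0}$, this reads $|1 - \lambda| \leq 1 - \kappa$, whence $\kappa \leq \lambda \leq 2-\kappa$ and in particular $\lambda \geq \kappa$. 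Applying this to the eigenfunction realizing the first non-zero eigenvalue $\lambda_{1}$ gives the conclusion.

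The step I expect to require the most care is the contraction lemma, since it is where the geometric hypothesis is converted into an analytic statement about $M$ through duality, and where the extension of the curvature bound from adjacent pairs to arbitrary pairs must be invoked. One must also verify the reversibility of the normalized walk so that $M$ has real spectrum: without reality the argument only controls $|1-\lambda|$, and the passage to $\lambda \geq \kappa$ would fail. Once the contraction estimate and the reality of the spectrum are secured, the eigenfunction computation is immediate, and the same strategy is precisely what I would adapt to the $1$-Laplacian $\mathcal{L}'_{1}$ in order to prove Theorem \ref{main0}.
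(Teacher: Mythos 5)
Your proof is correct. The paper does not actually prove this statement---it is quoted with a citation to Ollivier \cite{Ol1}---and your argument (pass to the averaging operator $M = I - \Delta_{0}$, show via Kantorovich--Rubinstein duality and the curvature bound that $M$ contracts the Lipschitz seminorm by the factor $1-\kappa$, then apply this to a non-constant eigenfunction, using reversibility for reality of the spectrum) is precisely Ollivier's original proof, and is also the template the paper itself adapts in its proof of Theorem \ref{main0}.
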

However, this theorem does not hold for $\mathcal{L'}_{1}$ in general. A star graph is such a counter example (see Example \ref{star}).
\begin{proof}[Proof of Theorem \ref{main0}]
Let $f$ be an eigenfunction with respect to $\lambda_{1}$. Fix any two edges $e=(x, y), e'=(y, z) \in E(G)$ with $d(e, e')=1$. \\
　Case 1. If $e$ and $e'$ are not contained in any triangle, then we orient edges $\bar{e} \in \Gamma(e) \cup \Gamma(e')$ as follows (see Figure 1).
	\begin{enumerate}
		\item If $\bar{e} = e$, or if $\bar{e}$ and $e$ are connected by $x$, then we orient $\bar{e}$ with $\sgn ([x], \partial[\bar{e}]) = -1$.
		\item If $\bar{e}$ and $e$ are connected by $y$, or if $\bar{e}$ and $e'$ are connected by $y$, then we orient $\bar{e}$ with $\sgn ([y], \partial[\bar{e}]) = 1$.
		\item If $\bar{e} = e'$, or if $\bar{e}$ and $e$ are connected by $z$, then we orient $\bar{e}$ with $\sgn ([z], \partial[\bar{e}]) = -1$.
	\end{enumerate} 
	\begin{figure}[h]
     	\label{Ori1} 
     		\begin{center} 
     		\includegraphics[scale=0.6]{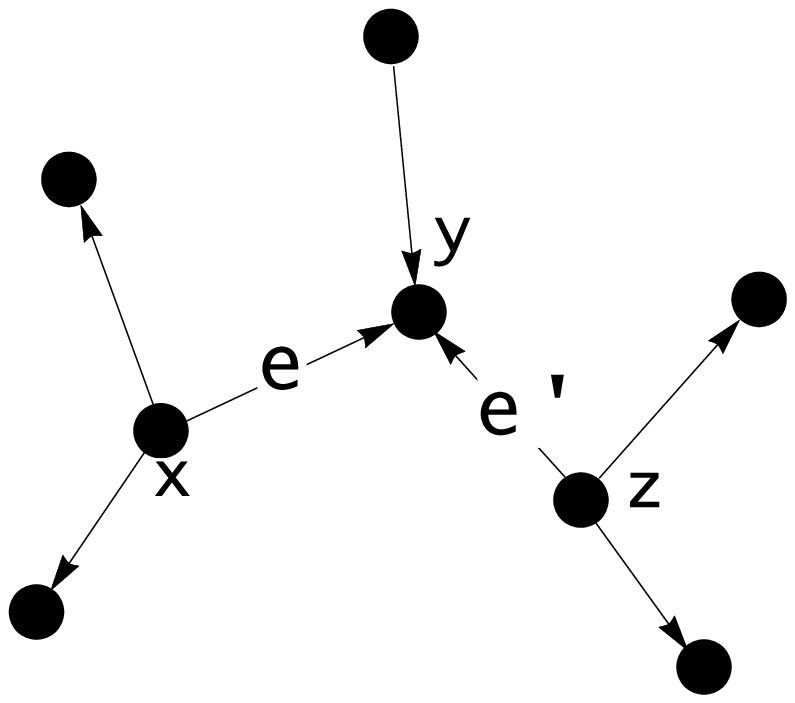}
     		\caption{Orientation of $\Gamma(e) \cup \Gamma(e')$}
     		\end{center}
    	\end{figure}
Then we obtain
	\begin{eqnarray}
	\label{case1-equality1}
		\begin{cases}
		(L^{down}f)(e) = \sum_{\bar{e} \in \Gamma(e)}m_{e}(\bar{e})f(\bar{e}) =   \lambda_{1} f(e) - \cfrac{2}{d} f(e),\\ 
		(L^{down}f)(e') = \sum_{\bar{e'} \in \Gamma(e')}m_{e'}(\bar{e'})f(\bar{e'}) =   \lambda_{1} f(e') - \cfrac{2}{d} f(e'). 
		\end{cases}
	\end{eqnarray}
If $f$ is a constant function, then $\lambda_{1}=1+2/d$. In this case, there is nothing to prove. In fact, by the definition of the Ricci curvature, we see $\kappa \leq 1$. So, we have
	\begin{eqnarray*}
	\kappa + \cfrac{2}{d} - 1 \leq \cfrac{2}{d} < \lambda_{1}. 
	\end{eqnarray*}
Thus we assume that $f$ is not a constant function and by scaling $f$ if necessarily, 
	\begin{eqnarray}
	\label{lip}
	\sup_{e, e' \in E : d(e, e')=1}|f(e) - f(e')| = 1.
	\end{eqnarray}
By Proposition \ref{kantoro} and (\ref{case1-equality1}), we have
	\begin{eqnarray*}
	d(e, e') ( 1 - \kappa ) &\geq& W(m_{e} , m_{e'}) \\ 
	&\geq& \sum_{\bar{e} \in \Gamma(e) \cup \Gamma(e')} f(\bar{e})(m_{e}(\bar{e}) - m_{e'}(\bar{e})) \\
	&=&  \sum_{\bar{e} \in \Gamma(e)}m_{e}(\bar{e})f(\bar{e}) - \sum_{\bar{e'} \in \Gamma(e')}m_{e'}(\bar{e'})f(\bar{e'}) \\
	&\geq&  \lambda_{1} f(e) - \cfrac{2}{d} f(e) - \lambda_{1} f(e') + \cfrac{2}{d} f(e') \\
	&=& - \left(\cfrac{2}{d} - \lambda_{1} \right) (f(e) -f(e')).
	\end{eqnarray*} 
By the symmetry of $e$ and $e'$, we obtain
	\begin{eqnarray*}
	|f(e) -f(e')| \leq \cfrac{1 - \kappa}{|2/d - \lambda_{1}|}.
	\end{eqnarray*}
Then, the above inequality holds for any edge, which together with (\ref{lip}), implies
	\begin{eqnarray*}
	\lambda_{1} &\geq& \kappa + \cfrac{2}{d} - 1.
	\end{eqnarray*}

　Case 2. If $e$ and $e'$ are contained in a triangle, then we put $e_{0} :=(x, z)$. We orient the edge $e_{0}$ with $\sgn([x], \partial[e_{0}])=1$, and orient other edges $\bar{e} \in (\Gamma(e) \cup \Gamma(e')) \setminus \left\{e_{0} \right\}$ in the same way as Case 1 (see Figure 2). 
 	\begin{figure}[h]
     	\label{Ori3} 
     		\begin{center} 
    		\includegraphics[scale=0.7]{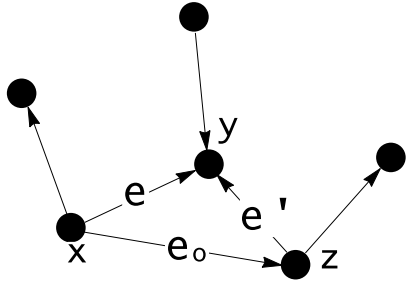}
     		\caption{Orientation of $\Gamma(e) \cup \Gamma(e')$}
     		\end{center}
    	\end{figure}
	
	Then we obtain
	\begin{eqnarray}
	\label{case2-equality1}
		\begin{cases}
		(L^{down}f)(e) = \sum_{\bar{e} \in \Gamma(e)}m_{e}(\bar{e})f(\bar{e}) =   \lambda_{1} f(e) - \cfrac{2}{d} f(e),\\ 
		(L^{down}f)(e') = \sum_{\bar{e'} \in \Gamma(e') \setminus \left\{e_{0} \right\} }m_{e'}(\bar{e'})f(\bar{e'}) - \cfrac{1}{d}f(e_{0})=   \lambda_{1} f(e') - \cfrac{2}{d} f(e'), 
		\end{cases}
	\end{eqnarray}
If $f$ is a constant function, then $\lambda_{1}=0$. This is a contradiction. Thus $f$ is not a constant function. By scaling $f$ if necessarily, we assume \eqref{lip}. We focus on $e_{0}$. By (\ref{case2-equality1}), we have
	\begin{eqnarray*}
	\lambda_{1} f(e_{0}) &=& \cfrac{2}{d}f(e_{0}) + \cfrac{1}{d}f(e) + \sum_{\bar{e} \in \Gamma(e) \setminus (\Gamma(e') \cup \left\{e' \right\})}m_{e}(\bar{e})f(\bar{e}) - \cfrac{1}{d}f(e') - \sum_{\bar{e'} \in \Gamma(e') \setminus (\Gamma(e) \cup \left\{e \right\})}m_{e'}(\bar{e'})f(\bar{e'})\\
	&=& \cfrac{2}{d}f(e_{0}) + \cfrac{2}{d}f(e) + \sum_{\bar{e} \in \Gamma(e) \setminus \Gamma(e') }m_{e}(\bar{e})f(\bar{e}) - \cfrac{2}{d}f(e') - \sum_{\bar{e'} \in \Gamma(e') \setminus \Gamma(e)}m_{e'}(\bar{e'})f(\bar{e'})\\
	&=& \cfrac{2}{d}f(e_{0}) + \lambda_{1} f(e) - \lambda_{1} f(e_{0}) - \cfrac{2}{d}f(e_{0})\\
	&=& \lambda_{1} (f(e) - f(e')).
	\end{eqnarray*}
Thus we obtain 
	\begin{eqnarray}
	\label{case2-equality2}
	f(e_{0}) = f(e) - f(e').
	\end{eqnarray}
By Proposition \ref{kantoro} and (\ref{case2-equality2}), we have 
	\begin{eqnarray*}
	d(e, e') ( 1 - \kappa ) &\geq& W(m_{e} , m_{e'}) \\ 
	&\geq& \sum_{\bar{e} \in (\Gamma(e) \cup \Gamma(e'))} f(\bar{e})(m_{e}(\bar{e}) - m_{e'}(\bar{e})) \\
	&=&  \sum_{\bar{e} \in \Gamma(e) \setminus \left\{e_{0} \right\}}m_{e}(\bar{e})f(\bar{e}) - \sum_{\bar{e'} \in \Gamma(e') \setminus \left\{e_{0} \right\}}m_{e'}(\bar{e'})f(\bar{e'}) \\
	&=&  \lambda_{1} f(e) - \cfrac{2}{d} f(e) - \cfrac{1}{d}f(e_{0}) - \lambda_{1} f(e') + \cfrac{2}{d} f(e') - \cfrac{1}{d}f(e_{0})\\
	&\geq&  \lambda_{1} f(e) - \cfrac{2}{d} f(e) - \lambda_{1} f(e') + \cfrac{2}{d} f(e') - \cfrac{2}{d} f(e_{0})\\
	&=& -\left(\cfrac{2}{d} - \lambda_{1} \right) (f(e) -f(e')) - \cfrac{2}{d}  (f(e) -f(e')) .
	\end{eqnarray*} 
By the symmetry of $e$ and $e'$ and $d(e, e') \geq 1$, we obtain
	\begin{eqnarray*}
	|4/d - \lambda_{1}| |f(e) -f(e')| &\leq& 1 - \kappa\\
	\end{eqnarray*}
Then, the above inequality holds for any edge, which together with (\ref{lip}), implies
	\begin{eqnarray*}
	\lambda_{1} \geq \kappa + \cfrac{4}{d} - 1.
	\end{eqnarray*}
This completes the proof.
	
 \end{proof}

\section{Example}
In this section, we treat some examples of graphs and calculate the eigenvalues of $\mathcal{L'}_{1}$ and the Ricci curvature.

\begin{example}[Complete graph $K_{n}$]
Let $K_{n}$ denote the complete graph with $n$ vertices. While the Ricci curvatures on vertices of $K_{n}$ are equal to $(n-2)/(n-1)$, the Ricci curvature on edges of $K_{n}$ is equal to $1/2$. On the other hand, the first non-zero eigenvalue of $\mathcal{L'}_{1}(K_{n})$ is equal to $n/2(n-2)$.
  \begin{figure}[h]
     \label{Complete} 
     \begin{center} 
     \includegraphics[scale=0.7]{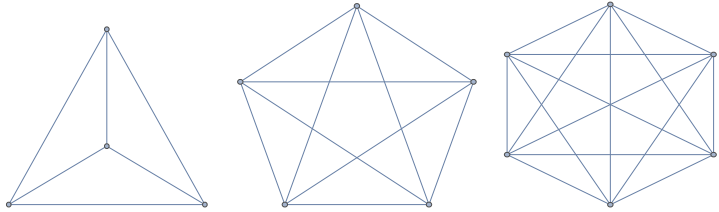}
     \caption{Complete graph}
     \end{center}
    \end{figure}
 \end{example}

\begin{example}[Cycle $C_{n}$]
The Ricci curvatures on vertices and those on edges of the cycle $C_{n}$ with length $n \geq 4$ are all zero. On the other hand, the eigenvalues of $\mathcal{L'}_{1}(C_{4})$ are $0$, $1$ and $2$, and the eigenvalues of $\mathcal{L'}_{1}(C_{5})$ are $0$ and $(5 \pm \sqrt{5})/4$.
\end{example}
  \begin{figure}[h]
     \label{Cycle} 
     \begin{center} 
     \includegraphics[scale=0.6]{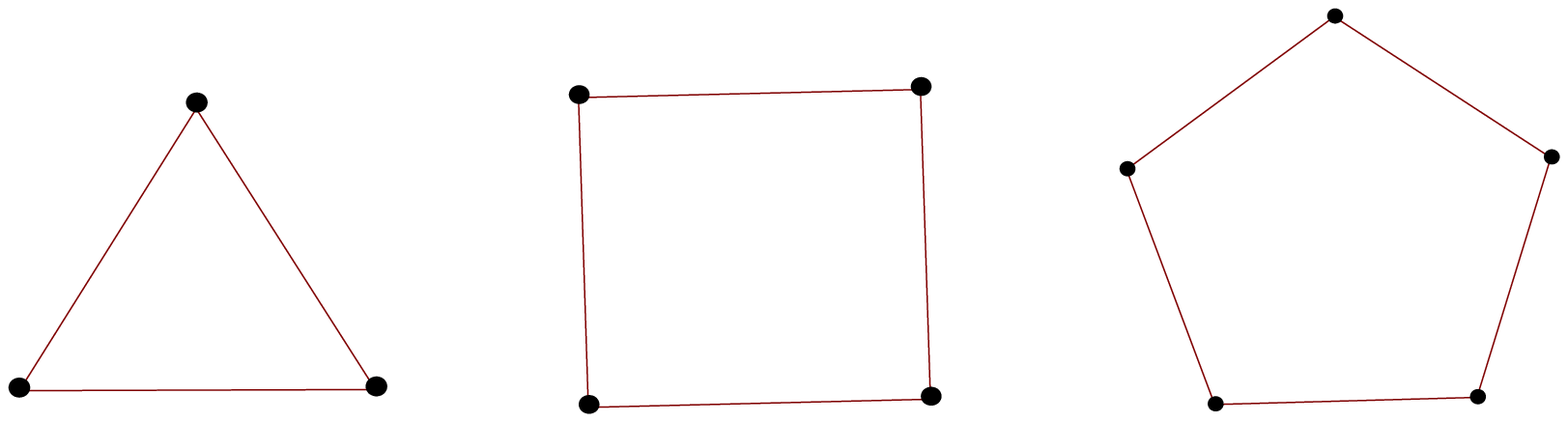}
     \caption{Cycle}
     \end{center}
    \end{figure}

\begin{example}[Complete bipartite graph $K_{n, m}$]
\label{star}
While the Ricci curvatures on vertices of any complete bipartite graph $K_{n, m}$ are all zero, the Ricci curvatures on edges $e=(x, y), e'=(y, z) \in E(K_{n, m})$ is 
\begin{eqnarray*}
\cfrac{d^{V}_{y} - 2}{m + n - 2}.
\end{eqnarray*}
If $n=1$, then $K_{1, m}$ is called the {\em star\ graph}, and the Ricci curvatures on edges of $K_{1, m}$ are $(m-2)/(m -1)$.

\begin{remark}
\label{main0-1}
The first non-zero eigenvalue $\lambda_{1}$ of $\mathcal{L'}_{1}(K_{1, m})$ is equal to $1/(m -1)$. So, if $m \geq 3$, then $\lambda_{1}$ is smaller than the Ricci curvatures. Since the degree of any edge is $m-1$, we have
\begin{eqnarray*}
\kappa + \cfrac{2}{d} - 1 = \cfrac{1}{m-1} = \lambda_{1}.
\end{eqnarray*}
\end{remark}
  \begin{figure}[h]
     \label{Star} 
     \begin{center} 
     \includegraphics[scale=0.8]{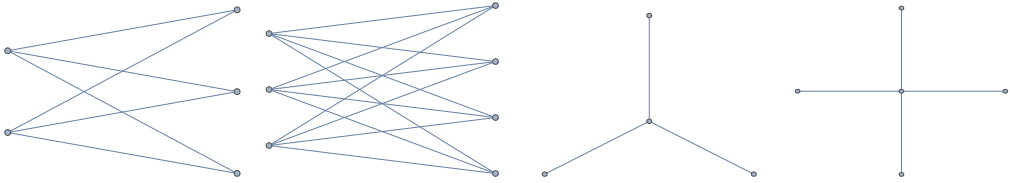}
     \caption{Complete bipartite graph and star graph}
     \end{center}
    \end{figure}
\end{example}
\begin{example}[Tree $T$]
While the Ricci curvature on vertices $x, y \in V(T)$ is $-2(1 - 1/d^{V}_{x} - 1/d^{V}_{y})$, the Ricci curvature on edges $e=(x, y), e'=(y, z) \in E(T)$ is 
\begin{eqnarray*}
\kappa(e, e') = \cfrac{d^{V}_{y}}{\min \left\{d_{e}, d_{e'} \right\}} + \cfrac{2 d^{V}_{y} - 2}{\max \left\{d_{e}, d_{e'} \right\}} - 2.
\end{eqnarray*}
If $T$ is a $d$-regular tree, then the Ricci curvature on edges is $(-d+2)/(2d-2)$.
\end{example}


\section{Relation between a weight function and a weighted graph}
The $i$-Laplacian depends on the weight function $w$. On the other hand, the Ricci curvature depends on the Wasserstein distance. If we figure out a relation between the weight function and the Wasserstein distance, then we obtain an estimate of the non-zero eigenvalues of $i$-Laplacian in more general setting. So, we consider a weighted graph $G$ that has the weight corresponding the weight function. Let $G$ be a weighted undirected graph, and we generalize definitions of the distance, the degree, and a probability measure.
	\begin{definition}
 	For two edges $e_{0}$ and $e_{n}$, let $\left\{e_{i} \right\}_{i=0}^{n}$ be the shortest path connecting $e_{0}$ and $e_{n}$. The {\em distance} between $e_{0}$ and $e_{n}$ is defined by
  		\begin{eqnarray*}
  		d(e_{0}, e_{n}) = \sum_{j=1}^{n} w(v_{j}),
  		\end{eqnarray*}
 	where $v_{j}$ is a vertex such that $e_{j-1}$ and $e_{j}$ are connected by $v_{j}$.
 	\end{definition}
 	\begin{definition}
 	For any edge $e$, the {\em degree} of $e$ is defined by
   		\begin{eqnarray*}
    		d_{e} = \sum_{\bar{e} \in \Gamma(e)} w(\bar{e}).
   		\end{eqnarray*}
 	\end{definition}
 	\begin{definition}
  	For any edge $e$, a probability measure $m_{e}$ is defined by
   		\begin{eqnarray*}
    		m_{e}(\bar{e})=
     			\begin{cases}
      			\cfrac{w(\bar{e})}{d_{e}},& \mathrm{if}\ \bar{e} \in \Gamma(e),\\
      			0,& \mathrm{otherwise}.
     			\end{cases}
   		\end{eqnarray*}
 	\end{definition}
By this definition, the operator $L^{down}$ is expressed as
    	\begin{eqnarray*}
     	(L^{down} f)(e) &=& \sum_{\bar{e} \in \Gamma(e): e \cap \bar{e} = v} \cfrac{w(\bar{e})}{w(v)} \mathrm{sgn}([v], \partial [e]) \mathrm{sgn}([v], \partial [\bar{e}]) f(\bar{e})\\
     	&=& \sum_{\bar{e} \in \Gamma(e): e \cap \bar{e} = v} m_{e}(\bar{e}) \mathrm{sgn}([v], \partial [e]) \mathrm{sgn}([v], \partial [\bar{e}]) \cfrac{d_{e}}{w(v)} f(\bar{e}).
    	\end{eqnarray*}
  We obtain lower and upper bounds of the Ricci curvature on a weighted graph.
 	\begin{theorem}
	\label{weight1}
  	For any edges $e$ and $e'$ with $d(e, e') = 1$, we have
    		\begin{eqnarray*}
     		\kappa(e, e') \geq - 2 \left(1 - \cfrac{w(e')}{d_{e}} - \cfrac{w(e)}{d_{e'}} \right)_{+}
		\end{eqnarray*} 
	\end{theorem}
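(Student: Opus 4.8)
The plan is to bound the curvature from below by exhibiting an explicit coupling between $m_{e}$ and $m_{e'}$: since $W(m_{e}, m_{e'})$ is by definition an infimum over couplings, any transport plan I write down gives an upper bound on it. Writing $e=(x,y)$ and $e'=(y,z)$ with common vertex $y$, I set $p := m_{e}(e') = w(e')/d_{e}$ and $q := m_{e'}(e) = w(e)/d_{e'}$, so that the asserted inequality reads $\kappa(e,e') \ge -2(1-p-q)_{+}$, i.e. $W(m_{e}, m_{e'}) \le d(e,e')$ when $p+q \ge 1$ and $W(m_{e}, m_{e'}) \le (3-2p-2q)\,d(e,e')$ when $p+q < 1$. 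The whole argument therefore splits at the threshold $p+q=1$.

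First I would record the three distance facts that drive the construction. Because every $\bar{e} \in \Gamma(e)$ is connected to $e$, the edge $e$ (which lies in $\Gamma(e')$ and carries the mass $q$ of $m_{e'}$) can be reached from any point of $\mathrm{supp}(m_{e}) = \Gamma(e)$ at cost $d(\bar{e},e) \le d(e,e')$; symmetrically, the edge $e'$ (carrying the mass $p$ of $m_{e}$) can be shipped to any point of $\mathrm{supp}(m_{e'}) = \Gamma(e')$ at cost $d(e',\bar{e}') \le d(e,e')$. For a generic pair $\bar{e} \in \Gamma(e)$, $\bar{e}' \in \Gamma(e')$ the chain $\bar{e} \sim e \sim e' \sim \bar{e}'$ gives the crude bound $d(\bar{e}, \bar{e}') \le 3\,d(e,e')$. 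Thus $p$ units of supply and $q$ units of demand are ``cheap'' (transportable at cost $d(e,e')$ rather than $3\,d(e,e')$), which is exactly the source of the discount $2p+2q$.

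With these facts in hand I would build the coupling as a feasible flow. When $p+q \ge 1$, route the $p$ units sitting on $e'$ to the demand of $m_{e'}$ away from $e$ (of total mass $1-q \le p$) and fill the demand $q$ on $e$ from the supply of $m_{e}$ away from $e'$ (of total mass $1-p \le q$); a short mass-balance check shows every unit then travels at cost at most $d(e,e')$, so $W(m_{e}, m_{e'}) \le d(e,e')$ and $\kappa(e,e') \ge 0$. When $p+q<1$, route the $p$ units on $e'$ and the $q$ units into $e$ as above --- now without conflict, since $1-q \ge p$ and $1-p \ge q$ --- and transport the remaining $1-p-q$ units of supply to the remaining demand using the crude bound $3\,d(e,e')$; summing gives $W(m_{e}, m_{e'}) \le p\,d(e,e') + q\,d(e,e') + 3(1-p-q)\,d(e,e') = (3-2p-2q)\,d(e,e')$, which is the claimed estimate.

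The routine parts are the two mass-balance verifications; the main obstacle I anticipate is the distance bookkeeping in the weighted setting. The clean facts $d(\bar{e},e)\le d(e,e')$ and $d(e',\bar{e}')\le d(e,e')$ used above hold when the connecting vertices contribute the same length as the shared vertex $y$ in $d(e_{0},e_{n})=\sum_{j} w(v_{j})$; for genuinely inhomogeneous vertex weights one must instead carry the actual weighted lengths $w(u)$ of the connecting vertices through the cost sum and check that they still assemble into the stated bound. Making that step rigorous --- that is, confirming the cheap-transport distances are controlled by $d(e,e')$ and the far distances by $3\,d(e,e')$ --- is the only delicate point; once it is settled, dividing the cost bound by $d(e,e')$ and reading off $\kappa(e,e')=1-W(m_{e},m_{e'})/d(e,e')$ finishes the proof.
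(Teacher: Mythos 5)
Your coupling construction is exactly the argument the paper has in mind: the paper gives no separate proof of Theorem \ref{weight1}, saying only that it ``is proved in the same way as Theorem \ref{Riccilower}'', and that proof (the Jost--Liu lower-bound argument transferred to edges) is precisely your plan --- ship the cheap masses $p=m_{e}(e')$ and $q=m_{e'}(e)$ one step each, ship the remaining $(1-p-q)_{+}$ mass at cost at most three steps, and read off $W(m_{e},m_{e'})\leq (3-2p-2q)\,d(e,e')$ when $p+q<1$ and $W(m_{e},m_{e'})\leq d(e,e')$ when $p+q\geq 1$. Your mass-balance bookkeeping in both cases is correct.

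However, the ``delicate point'' you flag at the end is not something more care can settle: for genuinely non-constant vertex weights the statement itself is false, so your distance facts $d(\bar{e},e)\leq d(e,e')$, $d(e',\bar{e}')\leq d(e,e')$, $d(\bar{e},\bar{e}')\leq 3d(e,e')$ cannot be recovered. Take the path $a-x-y-z-b$ with all edge weights equal to $1$, $w(x)=w(z)=10$, $w(y)=1$, and $e=(x,y)$, $e'=(y,z)$. Then $d_{e}=d_{e'}=2$, so $p=q=1/2$ and the claimed lower bound is $0$. But $m_{e}=\tfrac12\delta_{(a,x)}+\tfrac12\delta_{e'}$ and $m_{e'}=\tfrac12\delta_{e}+\tfrac12\delta_{(z,b)}$, and the optimal coupling matches $(a,x)\to e$ and $e'\to(z,b)$ at cost
\begin{eqnarray*}
W(m_{e},m_{e'})=\tfrac12 w(x)+\tfrac12 w(z)=10,
\end{eqnarray*}
while $d(e,e')=w(y)=1$, giving $\kappa(e,e')=1-10=-9<0$. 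So the theorem implicitly requires the vertex weights to be constant --- the hypothesis the paper states explicitly in Theorem \ref{weight2} and uses again in Theorem \ref{weight3} (the constant $w_{0}$), and which is consistent with the remark that the result reduces to Theorem \ref{Riccilower} when $w(v)=1$. Under that hypothesis every one-step distance between connected edges equals $w_{0}=d(e,e')$, all three of your distance facts hold, and your proof is complete and coincides with the paper's intended argument.
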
 
  	\begin{theorem}
	\label{weight2}
  	We assume that the weight function on $V$ is a constant, say $w_{0}$. For any two edges $e$ and $e'$ with $d(e, e') = 1$, we have
   		\begin{eqnarray*}
    		\kappa(e, e') \leq \cfrac{w_{\cap}}{\mathrm{max} \left\{d_{e},  d_{e'} \right\}},
   		\end{eqnarray*}
		where  $w_{\cap} := \sum_{\bar{e} \in \Gamma(e) \cap \Gamma(e')} w(\bar{e})$.
  	\end{theorem}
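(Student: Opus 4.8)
The plan is to estimate $\kappa(e,e')$ from above by bounding the Wasserstein distance $W(m_{e},m_{e'})$ from below, and to invoke the Kantorovich--Rubinstein duality (Proposition~\ref{kantoro}) with a well-chosen $1$-Lipschitz test function. Since $\kappa(e,e')=1-W(m_{e},m_{e'})/d(e,e')$ and $d(e,e')=1$, the assertion is equivalent to the lower bound
\[
W(m_{e},m_{e'}) \geq 1-\frac{w_{\cap}}{\max\{d_{e},d_{e'}\}}.
\]

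First I would pin down how the constant-weight hypothesis enters. Any shortest path between two distinct edges passes through at least one vertex, so the distance between distinct edges is at least $w_{0}$; since $e$ and $e'$ share a vertex $v$, we have $d(e,e')=w(v)=w_{0}$, and the hypothesis $d(e,e')=1$ forces $w_{0}=1$. Consequently distinct edges always lie at distance at least $1$, so every function $f\colon E\to\{0,1\}$ is automatically $1$-Lipschitz.

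Next I would choose the test function $f:=\mathbf{1}_{E\setminus\Gamma(e')}$, that is, $f(\bar e)=0$ for $\bar e\in\Gamma(e')$ and $f(\bar e)=1$ otherwise; this is $1$-Lipschitz by the previous step. Because $m_{e'}$ is supported on $\Gamma(e')$, its contribution vanishes on the support of $f$, and the duality yields
\[
W(m_{e},m_{e'}) \geq \sum_{\bar e\in E} f(\bar e)\bigl(m_{e}(\bar e)-m_{e'}(\bar e)\bigr) = \sum_{\bar e\notin\Gamma(e')} m_{e}(\bar e) = 1-m_{e}(\Gamma(e')).
\]
Since $m_{e}$ is supported on $\Gamma(e)$, one has $m_{e}(\Gamma(e'))=\sum_{\bar e\in\Gamma(e)\cap\Gamma(e')} w(\bar e)/d_{e}=w_{\cap}/d_{e}$, hence $W(m_{e},m_{e'})\geq 1-w_{\cap}/d_{e}$. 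Repeating the argument with the roles of $e$ and $e'$ interchanged (using $g:=\mathbf{1}_{E\setminus\Gamma(e)}$ together with $W(m_{e},m_{e'})=W(m_{e'},m_{e})$) gives $W(m_{e},m_{e'})\geq 1-w_{\cap}/d_{e'}$. Keeping the stronger of the two bounds produces the displayed inequality, and dividing by $d(e,e')=1$ completes the estimate $\kappa(e,e')\leq w_{\cap}/\max\{d_{e},d_{e'}\}$.

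The only genuine obstacle is the verification that the indicator is $1$-Lipschitz, and this is precisely where the assumption of a constant vertex weight is indispensable: for a non-constant weight, two adjacent edges could lie at distance smaller than $1$, so the unit jump of $f$ would break the Lipschitz bound and the duality estimate would fail. Everything else is a direct computation with the explicit measures $m_{e}$ and $m_{e'}$, exactly paralleling the unweighted Theorem~\ref{Ricciupper}.
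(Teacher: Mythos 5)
Your proof is correct and is essentially the argument the paper intends: Remark 5.7 says Theorem \ref{weight2} is proved ``in the same way as'' Theorem \ref{Ricciupper}, i.e.\ by the standard Jost--Liu upper-bound argument, which is precisely your Kantorovich-duality estimate with the indicator test function $\mathbf{1}_{E\setminus\Gamma(e')}$ (equivalently: all $m_{e}$-mass outside $\Gamma(e)\cap\Gamma(e')$ must travel distance at least $1$), applied in both directions and combined via $\max\{d_{e},d_{e'}\}$. Your preliminary observation that $d(e,e')=1$ together with the constant vertex weight forces $w_{0}=1$, so that every $\{0,1\}$-valued function is $1$-Lipschitz, is exactly the point where the hypothesis enters and is handled correctly.
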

  	\begin{remark}
    Theorems \ref{weight1} and \ref{weight2} are proved in the same way as those of Theorems  \ref{Riccilower} and \ref{Ricciupper}. If $w(v)=1$, $w(e) = 1/d_{e}$, and $|\Gamma(e)|=d_{e}$ for any vertex $v$ and any edge $e$, these results coincide with Theorems \ref{Riccilower} and \ref{Ricciupper}.
  	\end{remark}
  Hereafter, we assume that the weight function on $E$ is a constant, say $w_{1}$. We generalize the estimate of the first non-zero eigenvalue of $\mathcal{L}^{down}_{1}$ as follows.
 	\begin{theorem}
	\label{weight3}
  	Let $\lambda_{1}$ be the first non-zero eigenvalue of $\mathcal{L}^{down}_{1}$. Suppose that $\kappa(e, e') \geq \kappa$ for any $e, e' \in E$ and for a real number $\kappa > 0$. Then, we have
  		\begin{eqnarray*}
   		\lambda_{1} \geq \left\{ d( \kappa - 1 )+ 2 \right\} \cfrac{w_{1}}{w_{0}}.
  		\end{eqnarray*}
 	\end{theorem}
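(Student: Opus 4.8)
The plan is to follow the proof of Theorem \ref{main0} verbatim, carrying the two constant weights $w_0$ and $w_1$ through every step; the whole argument then collapses onto the unweighted one after a single rescaling of the eigenvalue. Let $f$ be an eigenfunction of $\mathcal{L}^{down}_{1}$ for $\lambda_{1}$, and fix adjacent edges $e=(x,y)$ and $e'=(y,z)$, so that $d(e,e')=w(y)=w_{0}$. First I would record the diagonal of the operator: writing $\mathcal{L}^{down}_{1}=D_{0}W_{0}^{-1}D_{0}^{T}W_{1}=\tfrac{w_{1}}{w_{0}}D_{0}D_{0}^{T}$ for constant weights, the diagonal entry is $\tfrac{2w_{1}}{w_{0}}$ (each edge meets exactly its two endpoints), while the off-diagonal part is precisely the operator $L^{down}$ displayed in Section 5. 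Introducing the abbreviation $\mu:=\tfrac{w_{0}\lambda_{1}}{d\,w_{1}}$, the eigenvalue equation at $e$, after the Case 1 orientation of $\Gamma(e)\cup\Gamma(e')$ chosen exactly as in the proof of Theorem \ref{main0}, becomes the weighted analogue of (\ref{case1-equality1}), namely $\sum_{\bar{e}\in\Gamma(e)}m_{e}(\bar{e})f(\bar{e})=\mu f(e)-\tfrac{2}{d}f(e)$, and similarly for $e'$.

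Next I would run the Kantorovich–Rubinstein estimate. If $f$ is constant then the displayed relation forces $\mu=1+\tfrac{2}{d}$, whence $\lambda_{1}=\tfrac{(d+2)w_{1}}{w_{0}}$, which exceeds the claimed bound because $\kappa\le 1$; so assume $f$ is non-constant and rescale it so that $\sup_{d(e,e')=w_{0}}|f(e)-f(e')|=w_{0}$, which makes $f$ genuinely $1$-Lipschitz for the weighted edge metric. Using Proposition \ref{kantoro}, the inequality $W(m_{e},m_{e'})\le(1-\kappa)\,d(e,e')=(1-\kappa)w_{0}$ coming from $\kappa(e,e')\ge\kappa$, and the two eigenvalue identities, I obtain $(1-\kappa)w_{0}\ge(\mu-\tfrac{2}{d})(f(e)-f(e'))$; symmetrising in $e,e'$ gives $|\mu-\tfrac{2}{d}|\,|f(e)-f(e')|\le(1-\kappa)w_{0}$. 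The crucial point is that the factor $w_{0}$ cancels: evaluating at the adjacent pair realising the normalisation yields $|\mu-\tfrac{2}{d}|\le 1-\kappa$, i.e. $\mu\ge\kappa+\tfrac{2}{d}-1$, and substituting $\mu=\tfrac{w_{0}\lambda_{1}}{d\,w_{1}}$ back gives exactly $\lambda_{1}\ge\{d(\kappa-1)+2\}\tfrac{w_{1}}{w_{0}}$.

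It remains to treat the case (Case 2 of Theorem \ref{main0}) in which $e$ and $e'$ lie in a common triangle with third edge $e_{0}=(x,z)$; here I would orient $e_{0}$ by $\sgn([x],\partial[e_{0}])=1$ as before, re-derive the weighted analogue of (\ref{case2-equality2}), and push the same Wasserstein estimate through to obtain the strictly stronger bound $\mu\ge\kappa+\tfrac{4}{d}-1$, so that the stated (Case 1) inequality holds in every case. I expect the only real obstacle to be the bookkeeping in this triangle case: one must verify that the extra $e_{0}$-term in the eigenvalue equation combines with the weighted diagonal $\tfrac{2w_{1}}{w_{0}}$ so that the relation $f(e_{0})=f(e)-f(e')$ survives unchanged, and that the $w_{0}$ from the edge metric again cancels cleanly. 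The Case 1 estimate and the diagonal computation are routine once the weights are inserted.
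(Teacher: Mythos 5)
Your proposal is correct and follows exactly the route the paper intends: the paper's own ``proof'' of Theorem \ref{weight3} is just the remark that it is proved in the same way as Theorem \ref{main0}, and your argument carries that program out, with the rescaling $\mu = \tfrac{w_{0}\lambda_{1}}{d\,w_{1}}$ correctly absorbing the constant weights (diagonal $\tfrac{2w_{1}}{w_{0}}$, adjacent-edge distance $w_{0}$, normalisation $\sup|f(e)-f(e')|=w_{0}$) so that both Case 1 and the triangle case reduce verbatim to the unweighted computation.
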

   	\begin{remark}
   	Theorem \ref{weight3} is proved in the same way as that of Theorem \ref{main0}. If $w_{0}=1$, $\Gamma=d$ and $w_{1} = 1/d$, the result coincides Theorem \ref{main0}.
  	\end{remark}

\end{document}